\newtheorem{theo}{\bf Theorem}[section]
\newtheorem{lemma}{\bf Lemma}[section]
\newtheorem{coro}{\bf Corollary}[section]
\newcommand{\C}{{\mathcal C}}
\newcommand{\D}{{\mathcal D}}
\newcommand{\Z}{{\mathbb Z}}
\newcommand{\N}{{\Bbb N}}
\newcommand{\Q}{{\Bbb Q}}
\newcommand{\R}{{\mathbb R}}
\newcommand{\bea}{\begin{eqnarray*}}
\newcommand{\eea}{\end{eqnarray*}}
\newcommand{\be}{\begin{eqnarray}}
\newcommand{\ee}{\end{eqnarray}}
\newcommand{\vol}{\mbox{vol}\,}
\newcommand{\diam}{\mbox{\rm diam}\,}
\newcommand{\frob}{\mathrm{F}}
\newcommand{\modulo}{\,\mathrm{mod}\;}
\newcommand{\cone}{\mathrm{cone}}
\newcommand{\sg}{\mathrm{Sg}}
\newcommand{\gap}{\mathrm{gap}}
\newcommand{\Lat}{{\mathcal L}}
\numberwithin{equation}{section}
\begin{document}

%\begin{frontmatter}

\title{On the lattice programming gap of the group problems}
\author{Iskander Aliev}
\address{School of Mathematics, Cardiff University, Cardiff, UK, email: alievi@cardiff.ac.uk}
%\email{alievi@cardiff.ac.uk}

%\author{Martin Henk}
%\address{Institut f\"ur Algebra und Geometrie, Otto-von-Guericke
%Universit\"at Mag\-deburg, Universit\"atsplatz 2, D-39106-Magdeburg,
%Germany} \email{henk@math.uni-magdeburg.de}

% 90C10 90C27 	11H31

%\subjclass[2000]{????}
%\keywords{Knapsack problem, Frobenius number, successive minima, inhomogeneous minimum, distribution of lattices}
%
\begin{abstract}
Given a full-dimensional lattice $\Lambda\subset \Z^k$ and a cost vector $l\in \Q^k_{>0}$, we are concerned with
the family of the group problems
\be\label{abstract_problem}
\min\{ l\cdot x: x \equiv r (\modulo \Lambda), x\ge 0\}\,,\;\;\; r\in\Z^k.
\ee
%as $r$ varies over $\Z^k$.
The {\em lattice programming gap} $\gap(\Lambda,l)$ is the largest value of the minima in (\ref{abstract_problem})
as $r$ varies over $\Z^k$. We show that computing the  lattice programming gap is NP-hard when $k$ is a part of input.
We also obtain lower and upper bounds for $\gap(\Lambda,l)$ in terms of $l$ and the determinant of $\Lambda$.

%\vskip.3cm
%\noindent{\em Short abstract in text form used for submission:} Given a $k$-dimensional integral lattice $L$ and a rational cost vector $c$, we are concerned with the family of the group problems
%\bea
%\min\{ c x: x = r (\modulo L), x>=0\},\; r\; \mbox{integral}.
%\eea
%The lattice programming gap $\gap(L,c)$ is the largest value of the minima above as r varies over all integer vectors. We show that computing the  lattice programming gap is NP-hard when $k$ is a part of input. We also obtain %lower and upper bounds for $\gap(L,c)$.

\end{abstract}

%\keywords{Group relaxations; integer programming gap; diameters of quotient lattice graphs; covering radius}

\subjclass[2010]{{90C10; 90C27; 11H31}}
\keywords{group relaxations; integer programming gap; lattices; diameters of graphs; covering radius; Frobenius numbers.}

%\subjclass[2000]{Primary:    ; Secondary:  }

%\begin{keyword}
%group relaxations; integer programming gap; lattices; diameters of graphs; covering radius; Frobenius numbers.
%\MSC[2008] Primary: 90C10 \sep Secondary: 90C27 \sep 11H31.
%\end{keyword}

\maketitle
%\end{frontmatter}

\section{Introduction and statement of results}

Consider the integer programming problem
\be
\min\{ c\cdot x: Ax=b, x\ge 0\,,x\;\mbox{is integer}\}\,.
\label{initial_IP}
\ee
Gomory \cite{Gomory_group_relaxation} defined a group relaxation of $(\ref{initial_IP})$ as follows. Let $B$ and $N$ be the index sets of basic and nonbasic variables for an optimal basic solution to
the linear programming relaxation $\min\{ c\cdot x: Ax=b, x\ge 0\}$ of (\ref{initial_IP}).
Then the problem (\ref{initial_IP}) can be written as
\be \begin{split}
\min\{ c_B\cdot x_B+c_N\cdot x_N: A_Bx_B+A_Nx_N=b\,,x_B, x_N\ge 0\,,\\ x_B, x_N\;\mbox{are integer}\}\,
\label{initial_IP_rewritten}\end{split}
\ee
and a relaxation of (\ref{initial_IP_rewritten}) is obtained by removing the restriction $x_B\ge 0$:
\be \begin{split}
\min\{ c_B\cdot x_B+c_N\cdot x_N: A_Bx_B+A_Nx_N=b\,, x_N\ge 0\,,\\ x_B, x_N\;\mbox{are integer}\}\,.
\label{initial_group_relaxation}\end{split}
\ee
Hence (\ref{initial_group_relaxation}) is a lower bound for (\ref{initial_IP}) and it can be used in any branch and bound procedure.

The constraints $A_Bx_B+A_Nx_N=b$ in (\ref{initial_group_relaxation}) can be written in the equivalent form
$x_B=A_B^{-1}b-(A_B^{-1}A_N)x_N$.
Thus, given any nonnegative integral vector $x_N$, the vector $x_B$ is integer if and only if
$(A_B^{-1}A_N)x_N\equiv A_B^{-1}b (\modulo 1)$.
Setting $c_N'=c_N-c_BA_B^{-1}A_N$, we can rewrite (\ref{initial_group_relaxation}) as
\be
\min\{ c_N'\cdot x_N: (A_B^{-1}A_N)x_N\equiv A_B^{-1}b (\modulo 1)\,, x_N\ge 0\,, x_N\;\mbox{is integer}\}\,.
\label{group_relaxation}
\ee
The program (\ref{group_relaxation}) is called the {\em Gomory's group relaxation} for (\ref{initial_IP}).

In this paper we fix a cost vector $c\in \Q^n$ and for a matrix $A\in \Z^{d\times n}$ of rank $d$ and $b\in \sg(A)=\{Au: u\in \Z^n_{\ge 0}\}$ consider the integer program
\bea
IP_{c}(A,b)=\min\{ c\cdot x: Ax=b, x\in \Z^n_{\ge 0}\}\,.
%\label{formal_IP}
\eea
For simplicity, we assume that the cone $\cone(A)=\{Ax: x\ge 0\}$ is pointed and that the subspace $A^{\bot}=\{x\in \R^n: Ax=0\}$, the kernel of $A$,
intersects the nonnegative orthant $\R^n_{\ge 0}$ only at the origin. This assumption guarantees that $IP_{c}(A,b)$ is bounded for all $b\in \sg(A)$.
%In addition, the cost vector $c$ will be assumed to be {\em generic} in the sense that each program $IP_c(A,b)$ has a unique optimal solution.

Consider the $(n-d)$-dimensional lattice $\Lat(A)=A^{\bot}\cap \Z^n$.  The program $IP_{c}(A,b)$ is equivalent to the lattice program
\be
\min\{ c\cdot x: x \equiv u (\modulo \Lat(A)), x\ge 0\}\,,
\label{lattice_IP}
\ee
where $u$ is any integer solution of the equation $Ax=b$.

A subset $\tau$ of $\{1,\ldots,n\}$ partitions $x\in \R^n$ as $x_{\tau}$ and $x_{\bar \tau}$, where $x_{\tau}$ consists of the entries indexed by $\tau$
and $x_{\bar \tau}$ the entries indexed by the complimentary set ${\bar \tau}$. Similarly, the matrix $A$ is partitioned as $A_\tau$ and $A_{\bar \tau}$.
Let $\tau$ be the set of indices of the basic variables for an optimal solution to the linear relaxation
$LP_{c}(A,b)=\min\{ c\cdot x: Ax=b, x\ge 0\}$
of the integer program $IP_{c}(A,b)$.
Let $\pi_{\tau}$ be the projection map from $\R^n$ to $\R^{n-d}$ that forgets all  coordinates indexed by ${\tau}$ and let $\Lambda(A)=\pi_{\tau}(\Lat(A))$.
The lattices $\Lat(A)$ and $\Lambda(A)$ are isomorphic (see e.g. Section 2 in \cite{RT_structure}) and the Gomory's group relaxation for $IP_{c}(A,b)$ is equivalent to the {\em lattice program}
\be
\min\{ c_{\bar \tau}'\cdot x: x \equiv u_{\bar \tau} (\modulo \Lambda(A)), x\ge 0\}\,,
\label{Gomory_lattice_relaxation}
\ee
where $c_{\bar \tau}'=c_{\bar \tau}-c_{\tau}A_{\tau}^{-1}A_{\bar \tau}$. Note that the vector $c_{\bar \tau}'$  is nonnegative. For simplicity
we will consider in this paper the {\em generic} case, when all entries of $c_{\bar \tau}'$ are positive.

The group relaxations can be defined for various sets of variables. Wolsey \cite{W} introduced the {\em extended group relaxations} obtained by dropping nonnegativity restrictions on the variables indexed by each subset of $\tau$. Ho\c{s}ten and Thomas \cite{HT} studied the set of all group relaxations obtained by dropping nonnegativity restrictions on the variables indexed by each face of a polyhedral complex associated
with $A$ and $c$.
For further details on the classical theory of group relaxations we refer the reader to \cite{Johnson_book} and \cite{AWW}.

In this paper we will consider the group relaxations in the following general form.
For a fixed cost vector $l\in \Q^k_{>0}$, a $k$-dimensional lattice $\Lambda\subset\Z^k$ and $r\in \Z^k$
we are concerned with the lattice program (also referred to as the {\em group problem})
\be\begin{split}
\min\{ l\cdot x: x \equiv r (\modulo \Lambda), x\ge 0\}\,.
\end{split}
\label{generic_group_relaxation}
\ee
Let $m(\Lambda,l,r)$ denote the value of the minimum in (\ref{generic_group_relaxation}).
%One can view  $m_{l}(\Lambda,r)$ as the {\em integer programming gap} of the program (\ref{generic_group_relaxation}).
We are interested in the {\em lattice programming gap} $\gap(\Lambda,l)$ of (\ref{generic_group_relaxation}) defined as
\be\begin{split}
\gap(\Lambda,l)=\max_{r\in \Z^k}m(\Lambda,l,r)\,.
\end{split}
\label{maximum_gap}
\ee

The lattice programming gaps were introduced and studied for sublattices of all dimensions in $\Z^k$ by Ho\c{s}ten and Sturmfels \cite{HS}.
The algebraic and algorithmic results on the lattice programming gaps obtained in \cite{HS} have applications to the statistical theory of multidimensional
contingency tables.

For fixed $k$ the value of $\gap(\Lambda,l)$ can be computed in polynomial time (see Section 3 in \cite{HS} and \cite{Fritz}).
The first result of this paper shows that computing $\gap(\Lambda,l)$ is NP-hard when $k$ is a part of input.
\begin{theo}\label{Gap_Frobi}
Computing $\gap(\Lambda, l)$ is NP-hard.
\end{theo}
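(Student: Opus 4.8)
The plan is to reduce the NP-hard problem of computing a Frobenius number to the computation of a lattice programming gap. Recall that for positive integers $a_0,a_1,\dots,a_n$ with $\gcd(a_0,\dots,a_n)=1$ the Frobenius number $\frob(a_0,\dots,a_n)$ is the largest integer not lying in $\sg(A)$, where $A=(a_0,a_1,\dots,a_n)$ is viewed as a $1\times(n+1)$ matrix; by a theorem of Ram\'{\i}rez Alfons\'{\i}n, computing $\frob(a_0,\dots,a_n)$ from the input $a_0,\dots,a_n$ is NP-hard. Given such an instance I would set $k=n$, take the cost vector $l=(a_1,\dots,a_n)\in\Q^n_{>0}$, and take the lattice
\be\label{plan-lattice}
\Lambda=\{\,y\in\Z^n:\ a_1 y_1+\dots+a_n y_n\equiv 0\modulo a_0\,\}\,,
\ee
which is a full-dimensional sublattice of $\Z^n$; a basis of $\Lambda$ (hence the whole instance $(\Lambda,l)$) can be written down in polynomial time in the bit-size of the $a_i$, and the map $y\mapsto a_1 y_1+\dots+a_n y_n\modulo a_0$ induces an isomorphism $\Z^n/\Lambda\cong\Z/a_0\Z$, so that $\det\Lambda=a_0$. (This $\Lambda$ is exactly the lattice $\Lambda(A)$ appearing in Gomory's relaxation $(\ref{Gomory_lattice_relaxation})$ for $\sum_{i=0}^n a_i x_i=b$ when the $0$-th variable is made basic.)

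The heart of the argument is the identity
\be\label{plan-key}
\gap(\Lambda,l)=\frob(a_0,a_1,\dots,a_n)+a_0\,.
\ee
To prove it, I would first note that since $r\equiv r'\modulo\Lambda$ exactly when $\sum_i a_i r_i\equiv\sum_i a_i r'_i\modulo a_0$, the value $m(\Lambda,l,r)$ depends only on the residue $s:=a_1 r_1+\dots+a_n r_n\modulo a_0$ and equals
\[
N(s):=\min\Bigl\{\ \sum_{i=1}^n a_i y_i\ :\ y\in\Z^n_{\ge 0},\ \ \sum_{i=1}^n a_i y_i\equiv s\modulo a_0\ \Bigr\}\,,
\]
the smallest element of the semigroup generated by $a_1,\dots,a_n$ that lies in the class $s$; as $r$ runs over $\Z^n$ the residue $s$ runs over all of $\Z/a_0\Z$, so $\gap(\Lambda,l)=\max_s N(s)$. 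Next, for an integer $t$ with $t\equiv s\modulo a_0$ I would check the elementary equivalence $t\in\sg(A)\iff t\ge N(s)$: a representation $t=a_0 x_0+\sum_{i\ge 1}a_i y_i$ with nonnegative entries forces $\sum_{i\ge 1}a_i y_i\le t$ and $\sum_{i\ge 1}a_i y_i\equiv s$, hence $\sum_{i\ge 1}a_i y_i\ge N(s)$; conversely if $t\ge N(s)$ one takes a minimiser $y$ for $N(s)$ and sets $x_0=(t-\sum_{i\ge 1}a_i y_i)/a_0\in\Z_{\ge 0}$. Thus the largest integer in the class $s$ that lies outside $\sg(A)$ is $N(s)-a_0$, and maximising over $s\in\Z/a_0\Z$ yields $(\ref{plan-key})$.

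Granting $(\ref{plan-key})$, any algorithm that computes $\gap(\Lambda,l)$ also computes $\frob(a_0,\dots,a_n)$ after one subtraction, so the NP-hardness of the Frobenius number is inherited by $\gap(\Lambda,l)$, which proves the theorem. The only points that need care are routine: one must verify that $m(\Lambda,l,r)$ is always attained (it is, because $a_0\Z^n\subseteq\Lambda$ makes every coset $r+\Lambda$ meet $\Z^n_{\ge 0}$, while $l>0$ bounds the objective below on the integral feasible set), that $\det\Lambda=a_0$ under the gcd hypothesis, and that producing $(\Lambda,l)$ from $(a_0,\dots,a_n)$ is genuinely polynomial. I do not anticipate any substantial obstacle beyond correctly invoking the NP-hardness of the Frobenius problem; the combinatorial content is entirely in the short identity $(\ref{plan-key})$.
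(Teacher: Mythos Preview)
Your reduction is correct and coincides with the paper's: you choose the same lattice and cost vector (the paper's $\Lambda_a$ and $l_a$, with your $a_0$ playing the role of $a_{k+1}$), and the key identity $\gap(\Lambda,l)=\frob(a)+a_0$ is exactly the paper's formula~(\ref{gap_via_Frobi}). The route to that identity differs, however. The paper derives it by combining Kannan's formula $\frob(a)=\rho(\Delta_{l_a},\Lambda_a)-\sum_{i=1}^{k+1}a_i$ with Lemma~\ref{gap_via_simplex}, which gives $\gap(\Lambda,l)=\rho(\Delta_l,\Lambda)-\sum_{i=1}^{k}l_i$; subtraction leaves precisely the modulus. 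Your argument is instead a direct, elementary proof via the Ap\'ery-type quantities $N(s)$ and the equivalence $t\in\sg(A)\Leftrightarrow t\ge N(t\bmod a_0)$, bypassing both Kannan's theorem and the covering-radius machinery of Section~3. Your approach is more self-contained and would stand even without Lemma~\ref{gap_via_simplex}; the paper's approach has the virtue of reusing the covering-radius framework that is needed anyway for Theorems~\ref{lemma_optimal_bound} and~\ref{lemma_upper_bound}. The remaining verifications you flag (surjectivity of $y\mapsto\sum a_iy_i\bmod a_0$ onto $\Z/a_0\Z$, hence $\det\Lambda=a_0$, and polynomial-time construction of a basis of $\Lambda$) are indeed routine under the hypothesis $\gcd(a_0,\dots,a_n)=1$.
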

The proof of Theorem \ref{Gap_Frobi} is based on a connection between  the lattice programming gaps and the Frobenius numbers.
Computing Frobenius numbers is NP-hard due to the well-known result of Ram\'{\i}rez Alfons\'{\i}n \cite{Alf1}.

Our next goal is to obtain the lower and upper  bounds for $\gap(\Lambda,l)$ in terms of the parameters of the
lattice program (\ref{generic_group_relaxation}). The bounds on the lattice programming gap provide bounds on the possible objective solutions when considering Gomory's group relaxation type problems. We show that the obtained lower bound is optimal and that the upper bound has the optimal order. The proofs are based on recent results of Marklof and Strombergson \cite{MS}
on the diameters of circulant graphs and on the estimates of Fukshansky and Robins \cite{Lenny} for the Frobenius numbers.

For a given closed bounded convex set $K$ with nonempty interior in $\R^k$ and a $k$-dimensional lattice $\Lambda\subset \R^k$,
the {\em covering radius} of $K$ with respect to $\Lambda$ is defined as $\rho(K,\Lambda)= \min\{r>0: r K + \Lambda =\R^k\}$.
Let $X_k$ be the set of all $k$-dimensional lattices $\Lambda\subset \R^k$ of determinant one, let $\Delta=\{x\in\R^k_{\ge 0}: \sum_{i=1}^k x_i\le 1\}$ be the standard $k$-dimensional simplex and let $\rho_k=\inf_{\Lambda \in X_k} \rho(\Delta, \Lambda)$. %For a vector $x=(x_1,\ldots, x_k)\in \R^k$  set $\Pi(x)=x_1x_2\cdots x_k$.
We obtain the following optimal lower bound for $\gap(\Lambda,l)$.
\begin{theo}
\begin{itemize}
\item[(i)] For any $l\in \Q^k_{>0}$, $k\ge 2$, and any $k$-dimensional lattice $\Lambda\subset \Z^k$
\be
\gap(\Lambda,l)\ge \rho_k (\det(\Lambda) l_1 \cdots l_k)^{1/k}-\sum_{i=1}^k l_i\,.
\label{optimal_bound}
\ee
\item[(ii)] For any $c\in \Q^{k+1}_{>0}$, $k\ge 2$, and any $\epsilon>0$, there exists a matrix $A\in \Z^{1\times (k+1)}$  such that
for all $b\in \sg(A)$ the knapsack problem $LP_c(A,b)$ has a unique solution with nonbasic variables indexed by $\sigma=\{1,\ldots,k\}$ and for
$l=c'_{\sigma}$
\be
\gap(\Lambda(A),l)< (\rho_k+\epsilon) (\det(\Lambda(A)) l_1 \cdots l_k)^{1/k}-\sum_{i=1}^k l_i\,.
\label{optimality}
\ee
Furthermore, there exists $b'\in \sg(A)$ such that the optimal value of $IP_c(A,b')$ is equal to $\gap(\Lambda(A),l)+ c_{\bar\sigma}A_{\bar\sigma}^{-1}b'$.
\end{itemize}
\label{lemma_optimal_bound}
\end{theo}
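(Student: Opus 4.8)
The plan is to establish (i) by reducing the lattice program to a geometric covering statement, and (ii) by reverse-engineering a near-extremal lattice from an extremal (or near-extremal) lattice in $X_k$ for the covering radius of $\Delta$.

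For part (i), fix $l\in\Q^k_{>0}$ and a $k$-dimensional lattice $\Lambda\subset\Z^k$. I would first observe that the value $m(\Lambda,l,r)$ can be written geometrically: $m(\Lambda,l,r)$ is the smallest value $t\ge 0$ such that the scaled simplex $\{x\in\R^k_{\ge 0}: l\cdot x\le t\}$ contains a point of the coset $r+\Lambda$. Hence $\gap(\Lambda,l)$ is the smallest $t$ such that the region $\{x : l\cdot x\le t\}\cap\R^k_{\ge 0}$, translated over all cosets, covers $\R^k$ modulo $\Lambda$ --- more precisely, $\gap(\Lambda,l)=\min\{t\ge 0 : (\{x\in\R^k_{\ge 0}: l\cdot x\le t\}) + \Lambda = \R^k\}$ is false only up to the lattice-point/continuous discrepancy, which is exactly the $\sum_i l_i$ correction term. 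The clean way: apply the diagonal change of variables $y_i = l_i x_i$, which carries $\R^k_{\ge 0}$ to itself, the simplex $\{l\cdot x\le t, x\ge 0\}$ to $t\Delta$, and $\Lambda$ to a lattice $\Lambda'$ with $\det(\Lambda')=\det(\Lambda)\,l_1\cdots l_k$. Rescaling $\Lambda'$ to determinant one multiplies all covering quantities by $(\det(\Lambda')^{1/k})$, so the purely continuous covering-radius bound $\rho(\Delta,\Lambda')\ge \rho_k\,\det(\Lambda')^{1/k}$ gives a point of $\R^k$ at covering-distance $\ge\rho_k(\det(\Lambda) l_1\cdots l_k)^{1/k}$ from $\Lambda'$ in the simplicial gauge; translating that point back to an integer coset representative for $\Lambda$ and accounting for the rounding needed to make it an actual lattice-point problem (as opposed to a real covering problem) loses at most $\sum_i l_i$, yielding (\ref{optimal_bound}).

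For part (ii), I would start from the definition $\rho_k=\inf_{\Lambda\in X_k}\rho(\Delta,\Lambda)$ and pick a determinant-one lattice $\Lambda^*$ with $\rho(\Delta,\Lambda^*)<\rho_k+\epsilon/2$. The task is to realize (a small perturbation of) $\Lambda^*$ as $\Lambda(A)$ for a single-row integer matrix $A\in\Z^{1\times(k+1)}$ — i.e., as the (projected) kernel lattice of a knapsack. Any $(k)$-dimensional sublattice of $\Z^{k+1}$ that is saturated arises as $\Lat(A)$ for $A$ a primitive integer row vector spanning its orthogonal complement, and $\Lambda(A)=\pi_\sigma(\Lat(A))$ for the nonbasic index set $\sigma$. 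So I would approximate $\Lambda^*$ (after the diagonal rescaling by $l=c'_\sigma$, run in reverse) by a rational lattice, clear denominators to get an integer sublattice of $\Z^k$, and check that it is the projection of $\Lat(A)$ for a suitable $A$; the freedom in choosing $A$ (scaling the primitive vector, i.e. scaling $\det$) lets me make the determinant large enough that (1) the $-\sum_i l_i$ term is negligible relative to the main term, upgrading the $\rho_k+\epsilon/2$ to $\rho_k+\epsilon$ in (\ref{optimality}), and (2) the LP $LP_c(A,b)$ has the prescribed unique optimal basis with nonbasic set $\sigma$ — this is a genericity/sign condition on $c$ versus $A$ that holds after a suitable choice of the pivot coordinate. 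Finally, to exhibit $b'$ achieving equality: the gap $\gap(\Lambda(A),l)$ is attained at some coset $r$; lift $r$ to $u_{\bar\sigma}$, set $b'=Au$ for the corresponding $u$, and then $IP_c(A,b')$ decomposes as the optimal LP basic contribution $c_{\bar\sigma}A_{\bar\sigma}^{-1}b'$ plus the group-relaxation optimum over that coset, which by choice of $r$ equals $\gap(\Lambda(A),l)$.

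The main obstacle I anticipate is part (ii): one must simultaneously (a) approximate an arbitrary determinant-one lattice well enough in the relevant (simplicial covering) metric by something of the special form $\pi_\sigma(\Lat(A))$ with $A$ a $1\times(k+1)$ integer matrix, and (b) guarantee that the linear program has the demanded unique solution with the right nonbasic set for \emph{all} $b\in\sg(A)$ — the latter is a global condition on the cone $\cone(A)$ and the cost $c$, not just a local one, so some care is needed in ordering the coordinates and choosing signs so that the ray corresponding to the basic variable is the unique optimal vertex direction. Controlling the covering-radius perturbation under the rational approximation and denominator-clearing, and confirming it does not blow up the error beyond $\epsilon$, is the other delicate point; here I would use continuity of $\rho(\Delta,\cdot)$ on the space of lattices together with the fact that scaling the lattice scales $\rho$ homogeneously, so the approximation error can be absorbed into the $\epsilon$ and the $\sum_i l_i$ term into the growing determinant.
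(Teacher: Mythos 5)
Your part (i) is essentially the paper's argument: the identity/inequality relating $\gap(\Lambda,l)$ to $\rho(\Delta_l,\Lambda)-\sum_i l_i$ (the rounding-by-at-most-$\sum_i l_i$ step), followed by the diagonal change of variables and the definition of $\rho_k$ applied to the resulting determinant-one lattice; this is sound. The genuine gap is in part (ii), in the step where you ``reverse-engineer'' a near-extremal lattice $\Lambda^*\in X_k$ as $\Lambda(A)$ for a single row $A=(a_1,\ldots,a_{k+1})\in\Z^{1\times(k+1)}$. The lattices of the form $\Lambda(A)=\pi_\sigma(\Lat(A))$ are exactly $\{x\in\Z^k:\ a_1x_1+\cdots+a_kx_k\equiv 0\ (\modulo a_{k+1})\}$, so their quotient $\Z^k/\Lambda(A)\cong\Z/a_{k+1}\Z$ is always cyclic. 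An arbitrary integer lattice obtained by rationally approximating $\Lambda^*$ and clearing denominators will in general not have cyclic quotient (e.g.\ nothing forces it away from sublattices like $2\Z^k$), so ``check that it is the projection of $\Lat(A)$ for a suitable $A$'' fails; your appeal to ``any saturated corank-one sublattice of $\Z^{k+1}$ is $\Lat(A)$'' does not help, because what your construction produces is a full-dimensional sublattice of $\Z^k$ that must be realized as the \emph{projection} of such a kernel, which is precisely the restrictive cyclic-quotient condition. On top of this, the cost vector $l=c'_\sigma$ is not free: $l_i=c_i-c_{k+1}a_i/a_{k+1}$ depends on the very $A$ you are trying to construct, so you cannot first fix $l$, ``run the diagonal rescaling in reverse'', and then choose $A$; and you must simultaneously force $a$ to lie in a cone around a direction for which $LP_c(a^t,b)$ has the unique optimal basis with nonbasic set $\sigma$ for \emph{all} $b\in\sg(a^t)$.

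The paper resolves all of these constraints at once with a quantitative input you do not supply: the Marklof--Str\"ombergsson equidistribution theorem, which says that the rescaled lattices $\Gamma(\Lambda(a^t),l)$, for $a$ ranging over primitive positive integer vectors restricted to a thin cone $T\D$, converge in distribution to a Haar-random lattice in $X_k$; combined with continuity of the limit distribution $P_k$ and the fact that $P_k(R)<1$ for $R>\rho_k$, this yields some admissible $a$ in the cone with $\rho(\Delta,\Gamma(\Lambda(a^t),l))<\rho_k+\epsilon$. Some density statement for cyclic-quotient (``Frobenius-type'') lattices in $X_k$ is true, but it is a theorem, not a denominator-clearing exercise, and you would still need it jointly with the cone restriction and the $a$-dependence of $l$. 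Your final step on $b'$ is close to correct but should be stated carefully: one must lift the \emph{attaining point} $x_0$ of the gap to $u$ with $u_\sigma=x_0$ and $u_{\bar\sigma}\in\Z_{\ge 0}$ (not merely lift the coset representative $r$), so that the group-relaxation optimum at $x_0$ is itself IP-feasible for $b'=Au$, which is what makes the relaxation tight (the paper invokes Gomory's theorem for exactly this point).
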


The only known values of $\rho_k$ are $\rho_1=1$ and $\rho_2=\sqrt{3}$ (see  \cite{Fary}).
It was proved in \cite{AG}, that $\rho_k>(k!)^{1/k}$. Thus we obtain the following estimate.

\begin{coro} For any $l\in \Q^k_{>0}$, $k\ge 2$, and any $k$-dimensional lattice $\Lambda\subset \Z^k$
\be
\gap(\Lambda,l)> (k! \det(\Lambda) l_1 \cdots l_k)^{1/k}-\sum_{i=1}^k l_i\,.
\label{coro_lower_bound}
\ee
\end{coro}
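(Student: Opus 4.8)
The plan is to derive the corollary directly from Theorem \ref{lemma_optimal_bound}(i) together with the stated estimate $\rho_k > (k!)^{1/k}$ from \cite{AG}. Since the right-hand side of (\ref{optimal_bound}) is an increasing function of $\rho_k$ (the quantity $(\det(\Lambda) l_1\cdots l_k)^{1/k}$ being strictly positive because $l\in\Q^k_{>0}$ and $\det(\Lambda)\ge 1$), replacing $\rho_k$ by the strictly smaller value $(k!)^{1/k}$ can only decrease the bound, and the inequality becomes strict.

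Concretely, first I would invoke Theorem \ref{lemma_optimal_bound}(i) to get
\be
\gap(\Lambda,l)\ge \rho_k (\det(\Lambda) l_1 \cdots l_k)^{1/k}-\sum_{i=1}^k l_i\,.
\ee
Next I would note that $\det(\Lambda)\ge 1$ since $\Lambda\subseteq\Z^k$ is a full-dimensional integer lattice, hence $(\det(\Lambda) l_1\cdots l_k)^{1/k}>0$. Then, using $\rho_k>(k!)^{1/k}$ from \cite{AG}, multiply this strict inequality by the positive number $(\det(\Lambda) l_1\cdots l_k)^{1/k}$ to obtain
\be
\rho_k (\det(\Lambda) l_1 \cdots l_k)^{1/k} > (k!)^{1/k}(\det(\Lambda) l_1 \cdots l_k)^{1/k} = (k!\,\det(\Lambda) l_1 \cdots l_k)^{1/k}\,.
\ee
Subtracting $\sum_{i=1}^k l_i$ from both sides and chaining with the first inequality yields (\ref{coro_lower_bound}).

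There is essentially no obstacle here: the corollary is a one-line consequence once Theorem \ref{lemma_optimal_bound}(i) is in hand. The only point requiring a word of care is the strictness of the final inequality, which needs the factor $(\det(\Lambda) l_1\cdots l_k)^{1/k}$ to be nonzero — guaranteed by the hypotheses $l\in\Q^k_{>0}$ and $\Lambda$ full-dimensional — and the fact that the bound of \cite{AG} is itself strict. So the proof is simply: apply the theorem, substitute the lattice-geometric estimate for $\rho_k$, and simplify.
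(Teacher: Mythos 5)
Your proposal is correct and is exactly the paper's argument: the corollary is obtained by substituting the strict bound $\rho_k>(k!)^{1/k}$ from \cite{AG} into the inequality of Theorem \ref{lemma_optimal_bound}(i), with strictness preserved because $(\det(\Lambda)\, l_1\cdots l_k)^{1/k}>0$. No further comment is needed.
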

For sufficiently large $k$ the bound (\ref{coro_lower_bound}) is not far from being optimal. Indeed, $\rho_k\le (k!)^{1/k}(1+O(k^{-1}\log k))$ (cf. \cite{DF}).

Group relaxations provide the lower bounds for integer programs $IP_{c}(A,b)$. From this viewpoint, part (i) of Theorem \ref{lemma_optimal_bound}  and Corollary
\ref{coro_lower_bound} estimate the largest possible value that such a bound can take.  Part (ii) of Theorem \ref{lemma_optimal_bound}
also shows that the obtained result is optimal in the case of knapsack problems.

Let $|\cdot|$ denote the Euclidean norm and let $\gamma_k$ be the $k$-dimensional Hermite constant
(see i.e. Section IX.7 in \cite{Cassels}).
We give the following upper bound for $\gap(\Lambda,l)$ (and hence for the minimum in (\ref{Gomory_lattice_relaxation})).
\begin{theo} For any $l\in \Q^k_{>0}$, $k\ge 2$, and any $k$-dimensional lattice $\Lambda\subset \Z^k$
\be
\gap(\Lambda,l)\le  \frac{k\gamma_k^{k/2}\det(\Lambda)(\sum_{i=1}^k l_i + |l|)}{2}-\sum_{i=1}^k l_i\,.
\label{upper_bound}
\ee
\label{lemma_upper_bound}
\end{theo}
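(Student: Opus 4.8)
The plan is to bound $m(\Lambda,l,r)$ for an arbitrary residue $r\in\Z^k$ by exhibiting an explicit nonnegative representative of the coset $r+\Lambda$ whose $l$-cost is small, and then take the worst case over $r$. The natural tool is a good basis of $\Lambda$: by Minkowski's second theorem, $\Lambda$ has a basis $b_1,\dots,b_k$ with $|b_1|\cdots|b_k|\le \gamma_k^{k/2}\det(\Lambda)$ (or one uses a reduced basis with $\prod|b_i|\le \gamma_k^{k(k-1)/4}\det(\Lambda)$ — whichever constant matches the statement; I would aim for the cleaner estimate and absorb the dimension factor as written). Given such a basis, every coset $r+\Lambda$ has a representative $v=r-\sum_i \lfloor t_i\rceil b_i$ lying in the fundamental parallelepiped $\{\sum_i s_i b_i: |s_i|\le 1/2\}$, so $|v|\le \tfrac12\sum_i |b_i|$.

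Next I would convert this bound on a single representative into a bound on the minimal-cost \emph{nonnegative} representative. The point is that adding a vector of $\Lambda$ to $v$ can be used to make all coordinates nonnegative while controlling the cost. Concretely, pick any lattice vector $w\in\Lambda$ with all coordinates strictly positive (this exists because $\Lambda$ is full-dimensional in $\Z^k$; e.g. a suitable positive integer combination of basis vectors, or more cheaply the vector $N\cdot\mathbf{1}$ is \emph{not} in $\Lambda$ in general, so one must genuinely use a positive lattice vector, whose coordinates are bounded in terms of the $|b_i|$). Then $x=v+tw\ge 0$ for $t$ a suitable nonnegative integer of size $O(\max_i|v_i|)$, and $l\cdot x = l\cdot v + t\,(l\cdot w)$. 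The right-hand side of \eqref{upper_bound}, with its $|l|$ term, strongly suggests that the argument is: estimate $l\cdot x$ by Cauchy–Schwarz, $l\cdot x\le |l|\,|x|$, after first arranging $x$ to sit in a box of the required size.

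Let me phrase the likely clean route. Translate the coset so that a representative $v$ satisfies $0\le v_i < $ (something): using the basis $b_1,\dots,b_k$ one can find a representative of $r+\Lambda$ of the form $v=\sum_i s_i b_i$ with $0\le s_i<1$, and then add the positive lattice vector $w$ once to kill any residual negativity coming from the $b_i$ having mixed signs — or, better, directly use that $\Lambda\supseteq d\Z^k$ for $d=\det(\Lambda)$ is false in general but $\Lambda$ contains a sublattice of the form $\prod_i q_i\Z$ along each axis with $\prod q_i\mid$ a power of $\det(\Lambda)$; reducing each coordinate of any representative modulo the corresponding $q_i$ lands it in $\{0,\dots,q_i-1\}$, and the $q_i$ are bounded via the successive minima by $q_i\le \gamma_k^{k/2}\det(\Lambda)$ (roughly). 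This gives a nonnegative representative $x$ with $x_i\le \gamma_k^{k/2}\det(\Lambda)-1$ coordinatewise, whence
\[
m(\Lambda,l,r)\le l\cdot x\le \bigl(\gamma_k^{k/2}\det(\Lambda)-1\bigr)\sum_{i=1}^k l_i,
\]
which is already close but has $k\sum l_i$ replaced by $\sum l_i$ and no $|l|$ term; to recover exactly \eqref{upper_bound} one instead bounds $x$ in Euclidean norm inside a ball of radius $\tfrac{k}{2}\gamma_k^{k/2}\det(\Lambda)$ and applies $l\cdot x\le |l|\cdot|x|$ together with the separate axis bounds, then combines. The main obstacle, and the step I would spend the most care on, is producing the nonnegative representative with the right size bound: one must leverage the hypothesis that $\ker A$ meets $\R^k_{\ge0}$ only at $0$ (equivalently here that $\cone$ considerations make the program bounded) to guarantee a strictly positive lattice direction exists, and then track constants so that the basis-length bound $\prod|b_i|\le\gamma_k^{k/2}\det(\Lambda)$ feeds through to a per-coordinate bound rather than only a volume bound. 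Everything after that — Cauchy–Schwarz, the subtraction of $\sum_i l_i$, and taking the max over $r$ — is routine.
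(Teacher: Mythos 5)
Your plan never closes at the point you yourself flag as the main obstacle: producing a nonnegative representative of $r+\Lambda$ with a controlled cost. The parallelepiped step only gives a representative $v$ with $|v|\le \tfrac12\sum_i|b_i|$ but with coordinates of mixed sign, and the passage to a nonnegative $x=v+tw$ is left hanging: no bound on the coordinates of a positive lattice vector $w$, on $l\cdot w$, or on the multiplier $t$ is ever proved, so the final inequality is not established. Worse, the one concrete escape route you consider is dismissed on the basis of a false statement: for a full-rank sublattice $\Lambda\subseteq\Z^k$ one \emph{always} has $\det(\Lambda)\Z^k\subseteq\Lambda$, since $\Z^k/\Lambda$ is a finite abelian group of order $\det(\Lambda)$ and hence $\det(\Lambda)x\in\Lambda$ for every $x\in\Z^k$. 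Had you used this, the proof would be immediate and would need neither Cauchy--Schwarz nor the $|l|$ term: reducing each coordinate of $r$ modulo $\det(\Lambda)$ gives a representative $x\equiv r\ (\modulo \Lambda)$ with $0\le x_i\le\det(\Lambda)-1$, whence $\gap(\Lambda,l)\le(\det(\Lambda)-1)\sum_{i=1}^k l_i$, which is in fact stronger than (\ref{upper_bound}) because $k\gamma_k^{k/2}/2\ge 1$ and $\sum_i l_i+|l|\ge\sum_i l_i$. (Your worry about needing the kernel hypothesis to guarantee a positive lattice direction is likewise misplaced: $\det(\Lambda)(1,\ldots,1)\in\Lambda$ always, so feasibility of every coset is automatic.) A smaller slip: Minkowski's second theorem bounds the successive minima, not the lengths of a basis; vectors attaining the minima need not form a basis when $k\ge 4$, so ``a basis with $|b_1|\cdots|b_k|\le\gamma_k^{k/2}\det(\Lambda)$'' is not literally available.

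For comparison, the paper's argument is of a different nature and explains where the $|l|$ and the factor $k/2$ come from: by Lemma \ref{gap_via_simplex}, $\gap(\Lambda,l)=\rho(\Delta_l,\Lambda)-\sum_i l_i$ with $\Delta_l=\{x\ge 0:\ l\cdot x\le 1\}$; the covering radius of $\Delta_l$ is bounded by that of its inscribed ball of radius $1/(\sum_i l_i+|l|)$, then Jarnik's inequality gives $\rho(B^k(1,0),\Lambda)\le k\lambda_k/2$, and Minkowski's second theorem together with $\lambda_i\ge 1$ (valid since $\Lambda\subseteq\Z^k$) gives $\lambda_k\le\gamma_k^{k/2}\det(\Lambda)$. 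So the particular shape of (\ref{upper_bound}) is an artifact of that covering-radius method; your instinct that a direct nonnegative-representative argument should suffice is sound, but as written the proposal does not carry it out.
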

The known exact values of $\gamma_k^k$ are $1$, $4/3$, $2$, $4$, $8$, $64/3$, $64$, $256$ (Sloan's sequence A007361 in \cite{OEIS}). By a result of Blichfeldt  (see, e.g. \cite{GrLek}) $\gamma_k\le 2\left(\frac{k+2}{\sigma_k}\right)^{2/k}$,
where $\sigma_k$ is the volume of the unit $k$-ball;  thus
$\gamma_k=O(k)$. The precision of the bound (\ref{upper_bound}) depends on the estimates for the covering radius of a simplex,
associated with the cost vector $l$, with respect to the lattice $\Lambda$. It follows from results in \cite[Section 6]{AH} that the order $\gap(\Lambda,l)= O_{k,l}(\det(\Lambda))$,
where the constant depends on $k$ and $l$, cannot be improved.

A widely used approach (see e.g. \cite{AEGJ}) is to consider a group relaxation induced by a single row $i$: $\sum_{j\in N} \hat{a}_{ij}x_j \equiv \hat{b}_i (\modulo 1)$ of the matrix constraint in (\ref{group_relaxation}). Here we may assume that all $\hat{a}_{ij}$ and $\hat{b}_i$ are rational numbers from $[0,1)$
with common denominator $D=|\det(B)|$. Thus, multiplying by $D$, we get the constraint $\sum_{j\in N} (D\hat{a}_{ij})x_j \equiv D\hat{b}_i (\modulo D)$. Set $k=|N|$,  $A=(D\hat{a}_{i1}, \ldots, D\hat{a}_{ik}, D)\in \Z^{1\times(k+1)}$ and $\Lambda=\pi_{\{k+1\}}(\Lat(A))$. We may assume that $l=c'_{\bar\tau}\in \Q^k_{>0}$, where $\tau$ is the set of indices of basic variables. Then for any integer solution $r\in \Z^k$ of $r\cdot \pi_{\{k+1\}}(A)\equiv D\hat{b}_i (\modulo D)$ the group relaxation induced by the row $i$ can be written in the form (\ref{generic_group_relaxation}). Thus all bounds derived in this paper can be applied to the group relaxation induced by a selected row of (\ref{group_relaxation}). Note that in this special case the lattice programming gap $\gap(\Lambda,l)$ can be associated with the diameter of a directed circulant graph (see \cite{MS} for details). Furthermore, the results of \cite{MS}  show that the lower bound (\ref{optimal_bound}) is a good predictor for the value of $\gap(\Lambda,l)$ for a `typical' $\Lambda$.

\section{$\gap(\Lambda,l)$ and diameters of quotient lattice graphs}

Assume for the rest of the paper $k\ge 2$. Following notation from \cite{MS}, let  $LG^+_k = (\Z^k, E)$ be the standard directed lattice graph with vertex set $\Z^k$. The edge set $E$ consists of all directed edges
$(x, x+e_j)$, where $x\in \Z^k$ and $e_1, \ldots, e_k$ are the standard basis vectors. %Given cost vector $l\in \R^k_{>0}$, we define the quasimetric on $LG^+_k$ by assigning length $l_j$
%to every edge $(x, x+e_j)$. %The distance from vertex $x$ to $y$ in $LG^+_k$ is defined as
%
%\bea
%d_{LG^+_k}(x,y)=
%\left\{
%\begin{array}{ll}
%(y-x)\cdot l & \mbox{if}\;y-x\in \Z^k_{\ge 0}\,,\\
%\infty & \mbox{otherwise}\,.
%\end{array}
%\right.
%\eea
%
Let $\Lambda$ be a $k$-dimensional sublattice of $\Z^k$.
We define the quotient lattice graph $LG_k^+/\Lambda$ as the digraph with vertex set $\Z^k/\Lambda$ and
the edge set $\{(x+\Lambda, x+e_j+\Lambda): x\in \Z^k, j=1, \ldots, k\}$.
Given cost vector $l\in \Q^k_{>0}$, we define the distance from vertex $x+\Lambda$ to $y+\Lambda$ in $LG^+_k/\Lambda$  as
\bea
d_{LG^+_k/\Lambda}(x+\Lambda,y+\Lambda)=
\min_{z\in (y-x+\Lambda)\cap \Z^k_{\ge 0}} l\cdot z \,.
\eea
%
%Let now $\Lambda$ be a sublattice of $\Z^k$ of rank $k$.
The diameter of  $LG_k^+/\Lambda$
is given by $\diam(LG_k^+/\Lambda)= \max_{y\in \Z^k/\Lambda} d_{LG^+_k/\Lambda}(0+\Lambda,y+\Lambda)$.
%
%The problem (\ref{generic_group_relaxation}) is associated with the sublattice $\Lambda(A)\subset \Z^{k}$.
%
Since for any $y\in\Z^{k}$
\bea\begin{split}
d_{LG^+_k/\Lambda}(0+\Lambda,y+\Lambda)
=\min\{ l\cdot x: x \equiv y (\modulo \Lambda), x\ge 0\}\,,
\end{split}
\eea
we obtain the following expression (cf. \cite{Gomory_group_relaxation}).
\begin{lemma}
$\gap(\Lambda,l)= \diam(LG_{k}^+/\Lambda)\,.$
\label{lattice_gap_diameter}
\end{lemma}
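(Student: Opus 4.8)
The plan is to prove the identity by unwinding the two definitions; the only points that need any care are that the minima $m(\Lambda,l,r)$ are attained and finite, and that the maximum over the infinite index set $\Z^k$ in the definition of $\gap(\Lambda,l)$ reduces to a maximum over the finite quotient $\Z^k/\Lambda$.

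First I would check that $m(\Lambda,l,r)$ is well defined for every $r\in\Z^k$. Since $\Lambda$ is a full-dimensional sublattice of $\Z^k$, the group $\Z^k/\Lambda$ is finite, of order $\det(\Lambda)$, so by Lagrange's theorem $\det(\Lambda)\,\Z^k\subseteq\Lambda$. Hence for $N$ large enough the vector $r+N\det(\Lambda)\,e$, with $e=e_1+\dots+e_k$, lies in $(r+\Lambda)\cap\Z^k_{\ge 0}$, which is therefore nonempty. As $l\in\Q^k_{>0}$, the numbers $l\cdot z$ for $z\in(r+\Lambda)\cap\Z^k_{\ge 0}$ are nonnegative rationals with a common denominator and are bounded below by $0$, so the infimum is attained; thus $m(\Lambda,l,r)$ is a well-defined nonnegative rational.

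Then I would observe, exactly as in the display immediately preceding the statement, that setting $x=0$ in the definition of the distance in $LG^+_k/\Lambda$ gives, for every $r\in\Z^k$,
\[
d_{LG^+_k/\Lambda}(0+\Lambda,\,r+\Lambda)=\min_{z\in(r+\Lambda)\cap\Z^k_{\ge 0}}l\cdot z=m(\Lambda,l,r).
\]
In particular this quantity depends only on the coset $r+\Lambda$, so maximising over $r\in\Z^k$ is the same as maximising over $y\in\Z^k/\Lambda$:
\[
\gap(\Lambda,l)=\max_{r\in\Z^k}m(\Lambda,l,r)=\max_{r\in\Z^k}d_{LG^+_k/\Lambda}(0+\Lambda,\,r+\Lambda)=\max_{y\in\Z^k/\Lambda}d_{LG^+_k/\Lambda}(0+\Lambda,\,y+\Lambda),
\]
and the last expression is $\diam(LG^+_k/\Lambda)$ by definition. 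This completes the proof.

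No step is a real obstacle: the statement is essentially a reformulation of the definitions. The one place a reader might want reassurance is that the ``diameter from the origin'' used here agrees with the usual notion $\max_{x,y}d_{LG^+_k/\Lambda}(x+\Lambda,y+\Lambda)$; this is immediate from translation invariance of the edge set of $LG^+_k$, which yields $d_{LG^+_k/\Lambda}(x+\Lambda,y+\Lambda)=d_{LG^+_k/\Lambda}(0+\Lambda,(y-x)+\Lambda)$, so the maximum over all ordered pairs of vertices is already achieved at $x=0$.
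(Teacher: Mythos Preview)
Your proof is correct and follows exactly the paper's reasoning: the paper does not even supply a separate proof, deriving the lemma directly from the displayed identity $d_{LG^+_k/\Lambda}(0+\Lambda,y+\Lambda)=\min\{l\cdot x:x\equiv y\ (\modulo\Lambda),\,x\ge 0\}$ together with the definitions of $\gap(\Lambda,l)$ and $\diam(LG_k^+/\Lambda)$. Your additional checks (finiteness of $m(\Lambda,l,r)$, reduction to the finite quotient, and translation invariance) simply fill in details the paper leaves implicit.
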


\section{$\gap(\Lambda,l)$ and the covering radius of a simplex}

Given cost vector $l\in \Q^k_{>0}$, let $\Delta_{l}  =\left\{ x \in\R_{\ge 0}^{k}: \, l\cdot x\le 1\right\}$.
Then the following result holds.
\begin{lemma}
$\gap(\Lambda,l)=\rho(\Delta_{l}, \Lambda)-\sum_{i=1}^{k} l_i\,.
%\label{Kannan_general}
$
\label{gap_via_simplex}
\end{lemma}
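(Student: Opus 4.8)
The plan is to translate the combinatorial quantity $\gap(\Lambda,l) = \diam(LG_k^+/\Lambda)$ from Lemma~\ref{lattice_gap_diameter} into the geometric language of covering radii. First I would fix the cost vector $l \in \Q^k_{>0}$ and recall that $m(\Lambda,l,r) = \min\{l\cdot x : x \equiv r \pmod{\Lambda},\, x \ge 0\}$, so that $\gap(\Lambda,l) = \max_{r \in \Z^k} m(\Lambda,l,r)$. The key observation is that for a fixed residue class $r + \Lambda$, the value $m(\Lambda,l,r)$ is the smallest $t \ge 0$ for which the dilated simplex $t\,\Delta_l = \{x \in \R^k_{\ge 0} : l\cdot x \le t\}$ contains an integer point of $r + \Lambda$. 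I would therefore introduce the auxiliary quantity $\mu(\Lambda, l) = \max_{r \in \Z^k} \min\{t \ge 0 : (t\,\Delta_l) \cap (r + \Lambda) \cap \Z^k \ne \emptyset\}$ and establish $\gap(\Lambda,l) = \mu(\Lambda,l)$ directly from the definitions; this is essentially a restatement.

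Next I would compare $\mu(\Lambda,l)$ with the covering radius $\rho(\Delta_l, \Lambda) = \min\{t > 0 : t\,\Delta_l + \Lambda = \R^k\}$. The subtlety, and the main obstacle, is the discrepancy between two conditions: the covering radius asks that every \emph{real} point of $\R^k$ lie in some translate $t\,\Delta_l + \lambda$ with $\lambda \in \Lambda$, whereas $\mu$ only asks that every \emph{residue class modulo $\Lambda$} be hit by a translate whose apex we may restrict to lie at a lattice point of $\Z^k$. Since $\Lambda \subset \Z^k$, picking the worst residue $r$ is the same as picking the worst coset representative, and the condition $(t\,\Delta_l)\cap(r+\Lambda)\cap\Z^k \ne \emptyset$ says precisely that $-r$ lies in $t\,\Delta_l - \lambda$ for some $\lambda$, i.e. that the point $-r \in \Z^k$ is covered by the $\Lambda$-periodic arrangement of copies of $t\,\Delta_l$. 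So $\mu(\Lambda,l)$ is the least $t$ such that $t\,\Delta_l + \Lambda \supseteq \Z^k$. The heart of the argument is then to show that covering all of $\Z^k$ by dilates of $\Delta_l$ is governed by the same threshold, up to the correction term $\sum_i l_i$, as covering all of $\R^k$.

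To pin down the correction, I would argue as follows. If $t\,\Delta_l + \Lambda \supseteq \Z^k$, then enlarging each simplex slightly absorbs the gaps between integer points: any $y \in \R^k$ is within $\ell^\infty$-distance $< 1$ (in a suitable sense adapted to the facet $\sum l_i x_i$) of an integer point, and pushing the apex of the covering simplex out by one unit in each coordinate direction — which inflates the right-hand side bound $l\cdot x \le t$ to $l \cdot x \le t + \sum_{i=1}^k l_i$, since the point $(1,\dots,1)$ has $l$-value $\sum l_i$ — yields $\R^k \subseteq (t + \sum_i l_i)\,\Delta_l + \Lambda$. Hence $\rho(\Delta_l,\Lambda) \le \mu(\Lambda,l) + \sum_i l_i$. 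Conversely, if $s\,\Delta_l + \Lambda = \R^k$ then in particular every integer point is covered, so $\mu(\Lambda,l) \le \rho(\Delta_l,\Lambda)$; but one must do slightly better to get the exact shift. Here I would use that the minimum defining $m(\Lambda,l,r)$ is attained at an integer point $x \ge 0$ lying in the coset, and that such a point can be found inside a translate of $\Delta_l$ whose apex is shifted \emph{in}wards by $(1,\dots,1)$ relative to a covering translate — realizing the "worst" residue as one whose covering simplex must be shrunk by exactly the vector $(1,\dots,1)$, accounting for the term $-\sum_i l_i$. Combining the two inequalities yields $\gap(\Lambda,l) = \mu(\Lambda,l) = \rho(\Delta_l,\Lambda) - \sum_{i=1}^k l_i$.

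I expect the delicate point to be making the "shift by $(1,\dots,1)$" argument airtight in both directions simultaneously, i.e. verifying that the worst coset for the integer-point covering problem is exactly one unit (in every coordinate) short of producing a real-point covering, neither more nor less. This is where one exploits that $\Delta_l$ has a single "slanted" facet $\{l\cdot x = 1\}$ together with the coordinate facets $\{x_i = 0\}$, so that the Minkowski sum $\Delta_l + [0,1]^k$ is again (a translate of) a dilated copy of $\Delta_l$ — precisely the dilate by factor $1 + \sum_i l_i$ after rescaling, or more cleanly $t\,\Delta_l + [0,1]^k \subseteq (t + \sum_i l_i)\,\Delta_l$ once one checks the facet inequalities. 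Granting that containment, the passage between $\Z^k$-covering and $\R^k$-covering costs exactly $\sum_i l_i$, and the lemma follows. A short check that $\rho(\Delta_l, \Lambda)$ is well-defined and finite (it is, since $\Lambda$ is full-dimensional and $\Delta_l$ has nonempty interior) completes the proof. $\carre$
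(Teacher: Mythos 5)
Your overall strategy is sound and, once completed, would give a self-contained proof that differs from the paper's (the paper rescales by ${\rm diag}(\Pi^{-1/k}l_1,\ldots,\Pi^{-1/k}l_k)$ and imports the two key identities from Lemmas 3 and 4 of \cite{MS}). The first half of your argument is essentially correct: $\gap(\Lambda,l)$ equals the least $t$ with $t\Delta_l+\Lambda\supseteq\Z^k$, and the inequality $\rho(\Delta_l,\Lambda)\le\gap(\Lambda,l)+\sum_{i=1}^k l_i$ follows from $\R^k=\Z^k+[0,1]^k$ together with the containment $t\Delta_l+[0,1]^k\subseteq(t+\sum_{i=1}^k l_i)\Delta_l$, which you state and which is trivially checked on the facet inequalities. (Your parenthetical claim that $\Delta_l+[0,1]^k$ \emph{is} a translate of a dilated copy of $\Delta_l$ is false --- the Minkowski sum has extra facets --- but only the containment is needed, so this is harmless.)

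The genuine gap is the reverse inequality $\gap(\Lambda,l)\le\rho(\Delta_l,\Lambda)-\sum_{i=1}^k l_i$, which you yourself flag as the unresolved ``delicate point'': as written, your ``conversely'' step only yields $\gap(\Lambda,l)\le\rho(\Delta_l,\Lambda)$, and the phrase about a covering simplex ``shrunk by exactly $(1,\ldots,1)$'' is an image, not an argument. The missing mechanism is a test-point plus integrality-rounding argument: fix $r\in\Z^k$ and $\epsilon\in(0,1)$, set $e=(1,\ldots,1)$ and apply the covering $\rho\Delta_l+\Lambda=\R^k$ (with $\rho=\rho(\Delta_l,\Lambda)$) to the point $y=r+(1-\epsilon)e$. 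This gives $\lambda\in\Lambda$ with $y-\lambda\ge 0$ and $l\cdot(y-\lambda)\le\rho$; then $x:=r-\lambda$ is an integer vector with $x_i\ge-(1-\epsilon)>-1$, hence $x\ge 0$, $x\equiv r\ (\modulo\Lambda)$, and $l\cdot x\le\rho-(1-\epsilon)\sum_{i=1}^k l_i$. Since $m(\Lambda,l,r)$ does not depend on $\epsilon$, letting $\epsilon\to 0$ gives $m(\Lambda,l,r)\le\rho-\sum_{i=1}^k l_i$ for every $r$, which is the needed bound. Note that the rounding step ($x_i>-1$ and $x_i\in\Z$ imply $x_i\ge 0$) and the limit in $\epsilon$ are precisely what make the shift ``exactly one unit'': taking $y=r+e$ outright fails, since a coordinate of $x$ could then equal $-1$. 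With this paragraph inserted, your two inequalities combine to $\rho(\Delta_l,\Lambda)=\gap(\Lambda,l)+\sum_{i=1}^k l_i$, which is the lemma.
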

\begin{proof}
The result follows from Lemma \ref{lattice_gap_diameter} and results of \cite{MS}. For completeness we give here a detailed proof.
Where possible, we keep the notation from \cite{MS} for convenience of the reader.

Let $\Lambda$ be a $k$-dimensional sublattice of $\Z^k$.  Consider the continuous torus $\R^k/\Lambda$. We can define the distance $d_{\R^k/\Lambda}$ between any two points $x+\Lambda$ and $y+\Lambda$ on $\R^k/\Lambda$ as
\bea
d_{\R^k/\Lambda}(x+\Lambda,y+\Lambda)=\min_{z\in (y-x+\Lambda)\cap\R^k_{\ge 0}} l\cdot z\,.
\eea
By the directed diameter of $\R^k/\Lambda$ we understand $\diam_l^+(\R^k/\Lambda)= \sup_{y\in \R^k/\Lambda} d_{\R^k/\Lambda}(0+\Lambda,y+\Lambda)$.
It follows from the proof of Lemma 3 in \cite{MS} that $\diam(LG_k^+/\Lambda)= \diam_l^+(\R^k/\Lambda)-\sum_{i=1}^k l_i$.  Then by Lemma \ref{lattice_gap_diameter} we can express $\gap(\Lambda,l)$ as
\be
\gap(\Lambda,l)=\diam_l^+(\R^k/\Lambda)-\sum_{i=1}^k l_i\,.
\label{gap_via_diameter}
\ee

Next, define the lattice $\Gamma(\Lambda,l) = \Lambda\,{\rm diag}(\Pi^{-1/k}l_1, \ldots,\Pi^{-1/k}l_k)$,
where $\Pi=\det(\Lambda)l_1 \cdots l_k$. Then for $e=(1,\ldots,1)\in \Z^k$ we have
\be
\diam_l^+(\R^k/\Lambda)=\Pi^{1/k}\diam_e^+(\R^k/\Gamma(\Lambda,l))\,.
\label{discrete_continuous}
\ee
By Lemma 4 in \cite{MS},
\be
\diam_e^+(\R^k/\Gamma(\Lambda,l))=\rho(\Delta, \Gamma(\Lambda,l))\,.
\label{diameter_radius}
\ee
Since the linear transform defined by the matrix ${\rm diag}(\Pi^{-1/k}l_1, \ldots,\Pi^{-1/k}l_k)$ maps $\Delta_l$ to $\Pi^{-1/k}\Delta$, we have
\be
\rho(\Delta, \Gamma(\Lambda,l))=\Pi^{-1/k}\rho(\Delta_{l}, \Lambda)\,.
\label{radii}
\ee
Combining (\ref{gap_via_diameter}), (\ref{discrete_continuous}), (\ref{diameter_radius}) and (\ref{radii}), we complete the proof of the lemma.
\end{proof}

\section{Proof of Theorem \ref{Gap_Frobi}}

We are concerned with the following problem:
\be\label{comput_gap}
\mbox{Given a } k\mbox{-dimensional lattice } \Lambda\subset \Z^k \mbox{ and } l\in \Q^k , \mbox{ compute }\gap(\Lambda,l)\,.
\ee
Here we suppose that the lattice $\Lambda$ is given by its basis.

Let $a$ be a positive integral $n$-dimensional primitive vector with $n=k+1$, i.e., $a=(a_1,\dots,a_{k+1})^t\in\Z_{>0}^{k+1}$ with $\gcd(a_1,\dots,a_{k+1})=1$.
The {\em Frobenius number} $\frob(a)$ is the largest number which cannot be
represented  as a nonnegative integral combination of the $a_i$'s.
The problem of computing $\frob(a)$ has been traditionally referred to as the
{\em Frobenius problem}.  This problem is NP-hard when $n$ is a part of input (Ram\'{\i}rez
Alfons\'{\i}n \cite{Alf1}).

Set $l_a=(a_1, \ldots, a_k)^t$ and $\Lambda_{a}=\{x\in \Z^k: a_1 x_1+\cdots +a_k x_k \equiv 0 \,(\modulo a_{k+1})\}$. %$\Lambda_{a} = \pi_{k+1}(L(a^t))$.
By a celebrated result of Kannan \cite{Kannan} the Frobenius number can be expressed as
$
\frob(a)= \rho(\Delta_{l_a}, \Lambda_a)-\sum_{i=1}^{k+1} a_i\,.
%\label{Kannan}
$
Hence,  Lemma \ref{gap_via_simplex} with  $l=l_a$ implies
\be
\frob(a)=\gap(\Lambda_a, l_a)-a_{k+1}\,.
\label{gap_via_Frobi}
\ee
By Corollary 5.4.10 in \cite{GLS}, given integer vector $a$, a basis of  $\Lambda_a$ can be computed in polynomial time.
Therefore, the formula (\ref{gap_via_Frobi}) provides a polynomial time Turing reduction from the Frobenius problem to (\ref{comput_gap}).

\section{Proof of Theorem \ref{lemma_optimal_bound}}

Part (i).
By Lemma \ref{gap_via_simplex} and (\ref{radii}) we can write
\be
\gap(\Lambda,l)=\rho(\Delta, \Gamma(\Lambda,l))\Pi^{1/k}-\sum_{i=1}^k l_i\,.
\label{gap_via_gamma}
\ee
Since $\Gamma(\Lambda,l)\in X_k$, the inequality (\ref{optimal_bound}) now follows from the
definition of $\rho_k$.

Part (ii).
There exists $u=(p_1/q,\ldots,p_{k+1}/q)\in \Q^{k+1}_{>0}$ with $p_1, \ldots, p_{k+1},q\in \Z_{>0}$,
such that for  any $b\in \sg(qu^t)$ the linear relaxation $LP_{c}(qu^t,b)$ has a unique optimal solution with nonbasic variables indexed by $\sigma=\{1,\ldots,k\}$.
Let ${\mathfrak F}=\{x\in \R^{k+1}: 0<x_1<\ldots<x_{k+1}\}$. Changing the order of coordinates and perturbing $u$, if needed, we may assume that $u\in {\mathfrak F}$.
For $\epsilon>0$ let $\C_\epsilon=\{x\in \R^{k+1}: \left|u/|u|-x/|x|\right|<\epsilon\}$.
One can choose sufficiently small $\epsilon_0>0$ such that $\C_{\epsilon_0}\subset {\mathfrak F}$ and for any $v\in \C_{\epsilon_0}\cap\Z^{k+1}$  the linear relaxation  $LP_{c}(v^t,b)$ has a unique optimal solution with nonbasic variables indexed by $\sigma$ for any $b\in \sg(v^t)$.

Set $\D=\C_{\epsilon_0}\cap [0,1]^{k+1}$, $l=c'_{\sigma}$ and  $\widehat{\N}^{k+1}$ be the set of integral vectors in $\R^{k+1}$ with positive coprime coefficients (i.e., the greatest common divisor of all coefficients is one).
We can view $\Gamma(\Lambda(a^t),l)$ as an $X_k$-valued random variable defined by taking $a$ uniformly at random in $\widehat{\N}^{k+1}\cap T \D$ for some $T>0$.
Let $\mu_0$ be the $SL(k, \R)$ invariant probability measure on $X_k$. It was shown in Section 2.5 of \cite{MS} that, as $T\rightarrow \infty$, $\Gamma(\Lambda(a^t),l)$ converges in distribution to a random variable $L\in X_k$, taken according to $\mu_0$ (note that in notation of \cite{MS}, $\Gamma(\Lambda(a^t),l)$ corresponds to $L_{n,{\boldsymbol a}, {\boldsymbol l}}$). Furthermore, following Section 2.5 of \cite{MS}, the function $L\rightarrow \rho(\Delta, L)$ is continuous on $X_k$ and hence, by the continuous mapping theorem, %
\be \rho(\Delta, \Gamma(\Lambda(a^t),l))\xrightarrow{\;d\;} \rho(\Delta, L)\;\;\mbox{as}\;\; T\rightarrow\infty\,,
\label{conv_distr}
\ee
where $X \xrightarrow{\;d\;} Y$ denotes convergence in distribution.

Consider the complementary distribution function $P_k(R)=\mu_0(\{\Lambda\in X_k: \rho(\Delta, \Lambda)>R\})$ of $\rho(\Delta, L)$.
Then (\ref{conv_distr}) is equivalent with the statement that for any $R\ge 0$ we have
\be\label{limit_prob}\begin{split}
\lim_{T\rightarrow \infty}\frac{1}{\#(\widehat{\N}^{k+1}\cap T \D) }\#\{a\in \widehat{\N}^{k+1}\cap T \D: \rho(\Delta, \Gamma(\Lambda(a^t),l))<R  \}\\
=1-P_k(R)\,.
\end{split}\ee

%\gap(\Lambda(a^t),l)+\sum_{i=1}^k l_i

It was proved in \cite{Marklof}
that $P_k(R)$ is continuous  for any fixed $k\ge 2$. It was also noticed in \cite{MS}, Remark 1.2 (see also \cite{Str}, p. 86) that
\be
P_k(R)=1 \;\mbox{for}\; 0\le R\le \rho_k,\;
\mbox{and}\; P_k(R)<1 \;\mbox{for} \;R>\rho_k.
\label{comp_distr}
\ee
By (\ref{comp_distr}) for any $\epsilon>0$ we have $P_k(\rho_k+\epsilon)<1$.
Hence, by (\ref{limit_prob}), %Since $P_k(R)$ is continuous,
for sufficiently large $T$ there exists a vector $a\in \widehat{\N}^{k+1}\cap T\D$  such that $\rho(\Delta, \Gamma(\Lambda(a^t),l))< \rho_k+\epsilon$.
As $T \D \subset \C_{{\epsilon_0}}$, the linear relaxation $LP_{c}(a^t,b)$ has a unique optimal solution with nonbasic variables indexed by $\sigma$  for  any $b\in \sg(a^t)$. By (\ref{gap_via_gamma}), the inequality (\ref{optimality}) holds for $A=a^t$.

Finally, we will show that for some $b'\in \sg(A)$ the optimal value of $IP_c(A,b')$ is equal to $\gap(\Lambda(A),l)+ c_{\bar\sigma}A_{\bar\sigma}^{-1}b'$.
Suppose $\gap(\Lambda(A),l)=m(\Lambda(A),l, r_0)$ and the latter minimum is attained at some $x_0\in \Z^{k}_{\ge 0}$. Then we can equivalently write $\gap(\Lambda(A),l)=m(\Lambda(A),l, x_0)$.
Let us take any vector $u\in \Z_{\ge 0}^{k+1}$ with $u_{\sigma}=x_0$. By Theorem 3 in \cite{Gomory_polyhedra}, $b'=Au$ satisfies the desired property.

\section{Proof of Theorem \ref{lemma_upper_bound}}

Let us find the inradius of the simplex $\Delta_l$. The volume $\vol_k(\Delta_l)= 1/(k!\prod_{i=1}^k l_i)$ and the surface area
\bea
A_{k-1}(\Delta_l)= \sum_{i=1}^k \frac{1}{(k-1)! \prod_{j=1\,,j\neq i}^k l_j} + \frac{|l|}{(k-1)!\prod_{i=1}^k l_i}
=\frac{ \sum_{i=1}^k l_i + |l|}{(k-1)! \prod_{i=1}^k l_i}\,.
\eea
All facets of $\Delta_l$ are touched by the insphere.  Hence, the inradius $r(\Delta_l)$ of the simplex $\Delta_l$ is given by
\be
r(\Delta_l)=\frac{k \,\vol_k(\Delta_l)}{A_k(\Delta_l)}=\frac{1}{\sum_{i=1}^k l_i + |l|}\,.
\label{simplex_inradius}
\ee
Let $B^k(r,x)$ denote the ball in $\R^k$ of radius $r$ centered at $x$. Then, as the covering radius is independent of translation, we have
\be
\rho(\Delta_{l}, \Lambda)\le \rho(B^k(r(\Delta_l),0), \Lambda))= (r(\Delta_l))^{-1}\rho(B^k(1,0),\Lambda)\,.
\label{radius_change}
\ee
Let $\lambda_1, \ldots, \lambda_k$ be Minkowski's successive minima of $B^k(1,0)$ with respect to the lattice
$\Lambda$. Since  $ \Lambda\subset \Z^k$,  we have $\lambda_i\ge 1$ for each $i$.
By Jarnik's inequalities (see e.g. \cite{GrLek})
\be
\rho(B^k(1,0),\Lambda)\le \frac{k \lambda_k}{2}\,.
\label{Jarnik}
\ee

In the geometry of numbers it is customary to use the Hermite constant $\gamma_k$ defined as the lower bound of the constants $\gamma_k'$ such that
every positive definite quadratic form $\sum f_{ij} x_ix_j$ in $k$ variables represents a number $\le \gamma_k' |\det(f_{ij})|^{1/k}$.
It is known (see e.g. Section IX.7. in \cite{Cassels}) that the {\em critical determinant} of $B^k(1,0)$ is equal to $\gamma_k^{-k/2}$.
Therefore, by Minkowski's second theorem for spheres (cf.~\cite[\S 18.4, Theorem 3]{GrLek}), we get
\be
\lambda_k\le \lambda_1\cdots\lambda_{k-1}\lambda_k\le \gamma_k^{k/2} \det(\Lambda)\,.
\label{Minkowski_for_spheres}
\ee
By Lemma \ref{gap_via_simplex}, $\gap(\Lambda,l)=\rho(\Delta_{l}, \Lambda)-\sum_{i=1}^{k} l_i$. Therefore, combining
(\ref{radius_change}), (\ref{simplex_inradius}), (\ref{Jarnik}) and (\ref{Minkowski_for_spheres}) we obtain
the upper bound (\ref{upper_bound}).

\noindent{\bf Acknowledgement}.
The author is grateful to Professor Martin Henk and to the reviewer for useful comments and suggestions.

\end{document}